\tikzset{every picture/.style={line width=0.75pt}} %set default
\newtheorem*{theorem*}{Theorem}
\newtheorem{theorem}{Theorem}[section]
\newtheorem{lemma}[theorem]{Lemma}
\newtheorem{proposition}[theorem]{Proposition}
\theoremstyle{definition} 
\newtheorem*{defin*}{Definition}
\newtheorem{defin}[theorem]{Definition}
\title{Topological conjugacy between Schneider's continued fraction map and a shift map on $\mathbb{Q}_p$}
\author{HANWEN LIU}
\date{November 17, 2023}
\begin{document}

\maketitle

\begin{abstract}
    We prove that Schneider's continued fraction map is topologically conjugate to a shift map defined on $\mathbb{Q}_p$, and the topological conjugation $f\colon\mathbb{Q}_p \rightarrow \mathbb{Q}_p$ is an isometry such that $f(\mathbb{Q})=\mathbb{Z}[\frac{1}{p}]$. Furthermore, for $x \in \mathbb{Q}_p$ we proved that $x \in \mathbb{Q}$ if and only if $f(x) \in \mathbb{Z}[\frac{1}{p}]$.
\end{abstract}

\quad

\section{Introduction and Backgrounds}

\quad

Throughout this article, let $p \in \mathbb{N}$ be an arbitrary but fixed rational prime.

As usual, denote by $v:=v_p\colon \mathbb{Q}_p \rightarrow \mathbb{Z} \cup\{\infty\}$ the $p$-adic valuation and $| \cdot \vert_p$ the absolute value on $\mathbb{Q}_p$ defined by $|x\vert_p:=p^{-v(x)}$ for every nonzero $x \in \mathbb{Q}_p$.
\begin{defin}\label{definition_1_1}
    Define $\epsilon\colon \mathbb{Q}_p^{\times} \rightarrow \mathbb{Z}_p^{\times}$ by $\epsilon(x):=|x\vert_p x$, and by convention, let $\epsilon(0)=1$.
\end{defin}

    Recall that $0 \rightarrow \mathbb{Z} \rightarrow \mathbb{Q}_p^{\times} \stackrel{\epsilon}{\rightarrow} \mathbb{Z}_p^{\times} \rightarrow 0$ is an exact sequence of Abelian groups.

\begin{defin}\label{definition_1_2}
    For any integer sequence $\left(a_n\right)_{n=0}^{\infty}$ such that $0 \leqslant a_n \leqslant p-1$ for all $n \in \mathbb{N}$, define $\left\lfloor\sum_{n=0}^{\infty} a_n p^n\right\rfloor_p:=a_0$.
\end{defin}

Recall that for any $x \in \mathbb{Z}_p$, $\lfloor x\rfloor_p$ is the unique element of $\{0, \ldots, p-1\}$ such that $|x-\lfloor x\rfloor_p\vert_p<1$.
\begin{defin}\label{definition_1_3}
    Define Schneider's continued fraction map $\tau\colon \mathbb{Q}_p \rightarrow \mathbb{Q}_p$ by 
    $$\tau(x):=\dfrac{1}{\epsilon(x)}-\left\lfloor\dfrac{1}{\epsilon(x)}\right\rfloor_p$$
\end{defin}

The ergodic theory and entropy of dynamical system $\left(\mathbb{Q}_p, \tau\right)$ is studied in [1] and [2].
\begin{defin}\label{definition_1_4}
    Define $\sigma\colon \mathbb{Q}_p \rightarrow \mathbb{Q}_p$ by $\sigma(x):=\epsilon(x)-\lfloor\epsilon(x)\rfloor_p$.
\end{defin}

The following statement describes the dynamics of $\sigma$, which is essentially a shift on $\mathbb{Q}_p$.
\begin{proposition}\label{proposition_1_5}
    Let $\left(a_n\right)_{n=0}^{\infty}$ and $\left(b_n\right)_{n=0}^{\infty}$ be sequences of rational numbers such that
    $$
    \left\{a_n \mid n \in \mathbb{N}\right\} \subseteq\{1, \ldots, p-1\},\ \left\{b_n \mid n \in \mathbb{N}\right\} \subseteq\left\{p^e \mid e \in \mathbb{Z}\right\} \cup\{0\}
    $$
    and $\left|b_{n+1}\right|_p<1$ for all $n \in \mathbb{N}$, then $\sigma\left(\sum_{n=0}^{\infty} a_n \prod_{m=0}^n b_m\right)=\sum_{n=1}^{\infty} a_n \prod_{m=1}^n b_m$.
\end{proposition}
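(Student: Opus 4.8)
The plan is to show that the normalization map $\epsilon$ strips off exactly the leading factor $b_0$, so that $\epsilon(x)$ equals the ``shifted'' series together with its leading digit $a_0$, after which subtracting $\lfloor\,\cdot\,\rfloor_p$ removes that digit. Writing $x=\sum_{n=0}^{\infty}a_n\prod_{m=0}^{n}b_m$, I would first factor out the common $b_0$:
\[
x=b_0\,y,\qquad y:=\sum_{n=0}^{\infty}a_n\prod_{m=1}^{n}b_m=a_0+\sum_{n=1}^{\infty}a_n\prod_{m=1}^{n}b_m,
\]
with the convention that the empty product $\prod_{m=1}^{0}b_m$ equals $1$. I assume $b_0\neq 0$ (otherwise $x=0$, a degenerate situation discussed below). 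For convergence, note that the hypothesis $|b_{n+1}|_p<1$ forces $b_n=p^{e_n}$ with $e_n\geq 1$ for every $n\geq 1$, so the $n$-th summand of $y$ has valuation $\sum_{m=1}^{n}e_m\geq n\to\infty$; hence both $y$ and the claimed right-hand series converge in $\mathbb{Q}_p$.

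The key step is to identify $y$ as a unit and read off its residue. By the observation just made, every term of $y$ with index $n\geq 1$ lies in $p\mathbb{Z}_p$, whereas the $n=0$ term is $a_0\in\{1,\dots,p-1\}$, a unit. Since $a_0$ is the unique summand of valuation $0$, the ultrametric inequality gives $v(y)=0$, i.e.\ $y\in\mathbb{Z}_p^{\times}$. Moreover $|y-a_0|_p<1$ with $a_0\in\{0,\dots,p-1\}$, so the characterization of $\lfloor\,\cdot\,\rfloor_p$ recalled after Definition~\ref{definition_1_2} yields $\lfloor y\rfloor_p=a_0$.

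Next I would compute $\epsilon(x)$ directly. Because $v(y)=0$ we have $v(x)=v(b_0)=e_0$, so $|x|_p=p^{-e_0}$ and therefore $\epsilon(x)=|x|_p\,x=p^{-e_0}\cdot p^{e_0}y=y$. Substituting into Definition~\ref{definition_1_4} gives
\[
\sigma(x)=\epsilon(x)-\lfloor\epsilon(x)\rfloor_p=y-\lfloor y\rfloor_p=y-a_0=\sum_{n=1}^{\infty}a_n\prod_{m=1}^{n}b_m,
\]
which is exactly the asserted shifted series.

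The computation itself is short; the point that must be gotten right is the structural fact that $\epsilon$ realizes the shift by cancelling $b_0$, which hinges entirely on $y$ being a unit. This in turn rests on the hypothesis $|b_{n+1}|_p<1$, which isolates $a_0$ as the unique lowest-valuation contribution and renders all later digits invisible both to $v(\cdot)$ and to $\lfloor\,\cdot\,\rfloor_p$. I expect the only delicate points to be the convergence bookkeeping and the degenerate cases where some $b_k=0$: if $k\geq 1$ both series truncate identically and the argument above applies verbatim, while the case $b_0=0$ forces $x=0$ and does not correspond to a genuine shift, so it should be excluded or treated separately.
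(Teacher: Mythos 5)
Your proof is correct and is exactly the verification the paper leaves implicit: the paper's own proof of Proposition \ref{proposition_1_5} is the single line that the result ``follows immediately from the very definition of $\sigma$,'' and your computation (factoring $x=b_0y$, showing $y\in\mathbb{Z}_p^{\times}$ with $\lfloor y\rfloor_p=a_0$, hence $\epsilon(x)=y$ and $\sigma(x)=y-a_0$) is precisely that definition-chase carried out in full. Your flag on the degenerate case is also well taken: if $b_0=0$ but $b_n=p$, $a_n=1$ for $n\geq 1$, then $x=0$ and $\sigma(x)=0$ while the right-hand side is $\sum_{n\geq 1}p^n\neq 0$, so the statement as literally written does need $b_0\neq 0$ (or a reinterpretation), a point the paper glosses over.
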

\begin{proof}
    The desired result follows immediately from the very definition of $\sigma$.
\end{proof}

Using the shift map $\sigma$, one can write the $p$-adic expansion of a number as a closed formula.

\begin{proposition}\label{proposition_1_6}
    Let $x \in \mathbb{Q}_p$, then $x=\sum_{n=0}^{\infty} \lfloor \epsilon\left(\sigma^n(x)\right)\rfloor_p\prod_{m=0}^n \sigma^m(x) / \epsilon\left(\sigma^m(x)\right)$.
\end{proposition}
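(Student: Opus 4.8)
The plan is to prove the formula by distilling the action of $\sigma$ into a single one-step recursion, telescoping that recursion into a partial sum with an explicit remainder, and then showing the remainder vanishes $p$-adically. First I would fix $x \in \mathbb{Q}_p$ and abbreviate $x_n := \sigma^n(x)$, $a_n := \lfloor \epsilon(x_n)\rfloor_p$ and $b_n := x_n/\epsilon(x_n)$, so that the asserted identity reads $x = \sum_{n=0}^{\infty} a_n \prod_{m=0}^n b_m$. The engine of the argument is the one-step identity
\[
x_n = b_n\bigl(a_n + x_{n+1}\bigr),
\]
valid for every $n$. Indeed, when $x_n \neq 0$ one has $x_n = (x_n/\epsilon(x_n))\,\epsilon(x_n) = b_n\,\epsilon(x_n)$, while Definition~\ref{definition_1_4} together with the definition of $\lfloor\cdot\rfloor_p$ gives $\epsilon(x_n) = \lfloor\epsilon(x_n)\rfloor_p + \sigma(x_n) = a_n + x_{n+1}$; when $x_n = 0$ both sides vanish, since $\epsilon(0)=1$ forces $b_n=0$ and $\sigma(0)=0$.

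Second, I would telescope. Substituting the one-step identity into the remainder term and inducting on $N$ yields
\[
x = \sum_{n=0}^{N} a_n \prod_{m=0}^n b_m \;+\; x_{N+1}\prod_{m=0}^{N} b_m .
\]
The inductive step is immediate: replacing $x_{N+1}$ by $b_{N+1}(a_{N+1}+x_{N+2})$ in the remainder produces exactly the $(N+1)$-th summand together with the new remainder, leaving the accumulated partial sum untouched.

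Third — and this is where the real content lies — I would show that the remainder $R_N := x_{N+1}\prod_{m=0}^{N} b_m$ converges to $0$. The key observation is that each application of $\sigma$ strips off a leading digit and strictly increases the valuation: for nonzero $y$ the value $\epsilon(y)$ is a unit in $\mathbb{Z}_p^{\times}$, so $\lfloor\epsilon(y)\rfloor_p$ is its leading digit and $v(\sigma(y)) = v\bigl(\epsilon(y) - \lfloor\epsilon(y)\rfloor_p\bigr) \geq 1$. Hence $v(x_m) \geq 1$ for all $m \geq 1$. Since $\epsilon(x_m)$ is a unit we have $v(b_m) = v(x_m)$, and therefore
\[
v(R_N) = v(x_{N+1}) + \sum_{m=0}^{N} v(b_m) = \sum_{m=0}^{N+1} v(x_m) \;\geq\; v(x) + (N+1) \xrightarrow{\;N\to\infty\;} \infty,
\]
so $|R_N|_p \to 0$ and the partial sums converge to $x$, proving the identity.

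I expect the main obstacle to be purely this convergence bookkeeping rather than any deep fact: the per-step identity is forced by the definitions, and the whole weight of the argument sits in the valuation estimate, which rests entirely on the shift property $v(\sigma^m(x))\geq 1$ for $m\geq 1$. The one point demanding care is the degenerate case in which $x_N = 0$ for some $N$: then $\sigma(0)=0$ propagates so that $x_m = 0$ and $b_m = 0$ for all $m \geq N$, the series terminates after finitely many nonzero terms, and the remainder is already identically $0$, so the finite telescoped identity is literally the claimed formula. (As a consistency check, note that the resulting $b_m = p^{\,v(x_m)}$ and $a_m$ meet the hypotheses of Proposition~\ref{proposition_1_5}, with $|b_m|_p<1$ for $m\geq 1$, so the expansion produced here is exactly the one on which $\sigma$ acts as a shift.)
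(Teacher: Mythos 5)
Your proof is correct and is, in substance, the argument the paper leaves implicit: the paper's entire proof of Proposition~\ref{proposition_1_6} is the one-line assertion that the series is the $p$-adic expansion of $x$, and your one-step identity $x_n = b_n(a_n + x_{n+1})$, the telescoped partial sum with remainder, and the valuation estimate $v(\sigma^m(x)) \geq 1$ for $m \geq 1$ are precisely what make that assertion rigorous. Your handling of the degenerate case $\sigma^N(x)=0$ (using $\epsilon(0)=1$, $\sigma(0)=0$) is also correct, so nothing needs to be added.
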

\begin{proof}
    This is the $p$-adic expansion of $x$.
\end{proof}

Proposition \ref{proposition_1_6} motivates the following definition.
\begin{defin}\label{definition_1_7}
    Define $f\colon \mathbb{Q}_p \rightarrow \mathbb{Q}_p$ by $f(x):=\sum_{n=0}^{\infty}\left\lfloor 1 / \epsilon\left(\tau^n(x)\right) \right\rfloor_p \prod_{m=0}^n \tau^m(x) / \epsilon\left(\tau^m(x)\right)$.
\end{defin}

In section \ref{sec2}, we prove that $f$ is an isometry such that the functional equation $f \circ \tau=\sigma \circ f$ holds.

In section \ref{sec3}, we prove that $f(\mathbb{Q})=\mathbb{Z}[\frac{1}{p}]$, and moreover for arbitrary $x \in \mathbb{Q}_p$ we prove that $x \in \mathbb{Q}$ if and only if $f(x) \in \mathbb{Z}[\frac{1}{p}]$.

\newpage 
\section{Proof of the Main Result}\label{sec2}

\quad

We use Pringsheim's notation for continued fractions.

We shall first present several basic properties of $p$-adic continued fractions that are parallel with those of the classical continued fractions in $\mathbb{R}$.

\begin{proposition}\label{proposition_2_1}
    For any $a_0, \ldots, a_n \in \mathbb{Z}_p^{\times}$ and $b_0, \ldots, b_n \in \mathbb{Q}_p$, if $b_1, \ldots, b_n \in p \mathbb{Z}_p$, then the formula 
    $$
    \left|\dfrac{b_0}{a_0 +} \cdots \dfrac{b_{n-1}}{a_{n-1}+} \dfrac{b_n}{a_n+x}-\dfrac{b_0}{a_0 +} \cdots \dfrac{b_{n-1}}{a_{n-1} +} \dfrac{b_n}{a_n+y}\right|_p=|x-y\vert_p \prod_{i=0}^n\left|b_i\right|_p \leqslant p^{-n}\left|b_0\right|_p
    $$
    holds for all $x, y \in p \mathbb{Z}_p$.
\end{proposition}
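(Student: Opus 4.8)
The plan is to establish the displayed equality first and then read off the inequality from the trivial estimates $|x-y|_p\le 1$ and $|b_i|_p\le p^{-1}$ for $1\le i\le n$, which multiply to give $|x-y|_p\prod_{i=0}^n|b_i|_p\le p^{-n}|b_0|_p$. For the equality I would peel the continued fraction one layer at a time, tracking $p$-adic valuations through the ultrametric inequality. The single observation that drives everything is that if $a\in\mathbb{Z}_p^{\times}$ and $w\in p\mathbb{Z}_p$, then $|a|_p=1>|w|_p$, so $|a+w|_p=1$; in particular $a+w$ is a unit and never vanishes.

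First I would introduce the tails $T_k(x):=\frac{b_k}{a_k+}\cdots\frac{b_n}{a_n+x}$ for $0\le k\le n$, with the convention $T_{n+1}(x)=x$, so that $T_k(x)=b_k/(a_k+T_{k+1}(x))$ and $T_0$ is the continued fraction in the statement. The key lemma, proved by downward induction on $k$, is that $T_k(x)\in p\mathbb{Z}_p$ for every $1\le k\le n$ and every $x\in p\mathbb{Z}_p$: the base case $T_n(x)=b_n/(a_n+x)$ has $|a_n+x|_p=1$ and $b_n\in p\mathbb{Z}_p$, so $|T_n(x)|_p=|b_n|_p\le p^{-1}$; and if $T_{k+1}(x)\in p\mathbb{Z}_p$, then $|a_k+T_{k+1}(x)|_p=1$, whence $|T_k(x)|_p=|b_k|_p\le p^{-1}$ since $b_k\in p\mathbb{Z}_p$ for $k\ge 1$. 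As a byproduct, every partial denominator $a_k+T_{k+1}(x)$ is a unit, so the continued fraction is well defined for all $x\in p\mathbb{Z}_p$.

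Next I would compute the differences. For each $0\le k\le n$,
$$T_k(x)-T_k(y)=\frac{b_k}{a_k+T_{k+1}(x)}-\frac{b_k}{a_k+T_{k+1}(y)}=\frac{b_k\bigl(T_{k+1}(y)-T_{k+1}(x)\bigr)}{(a_k+T_{k+1}(x))(a_k+T_{k+1}(y))},$$
and since $T_{k+1}(x),T_{k+1}(y)\in p\mathbb{Z}_p$ (for $k=n$ this is exactly the hypothesis $x,y\in p\mathbb{Z}_p$), both factors in the denominator are units. Taking $|\cdot|_p$ gives the recursion $|T_k(x)-T_k(y)|_p=|b_k|_p\,|T_{k+1}(x)-T_{k+1}(y)|_p$. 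Telescoping this from $k=0$ down to $k=n$ and using $T_{n+1}(x)=x$ yields $|T_0(x)-T_0(y)|_p=\prod_{k=0}^n|b_k|_p\cdot|x-y|_p$, which is precisely the asserted equality.

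The only genuine obstacle is the tail lemma, that is, verifying that every partial denominator is a $p$-adic unit; this is what makes the absolute value exactly multiplicative across each fraction bar and rules out any cancellation or blow-up. Once the tails are confined to $p\mathbb{Z}_p$, both the well-definedness and the clean product formula are forced, and the stated inequality follows immediately.
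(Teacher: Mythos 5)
Your proof is correct and takes essentially the same route as the paper: both peel the continued fraction one layer at a time, using the key fact that $a_k$ a unit plus an element of $p\mathbb{Z}_p$ is again a unit, so $|\cdot|_p$ is exactly multiplicative across each fraction bar, and then the product formula telescopes. The only difference is organizational — you make explicit the tail lemma (that all inner tails lie in $p\mathbb{Z}_p$, hence the fraction is well defined), which the paper leaves implicit in its ``by induction'' step.
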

\begin{proof}
    Since $\left|\dfrac{b_n}{x+a_n}-\dfrac{b_n}{y+a_n}\right|_p=\dfrac{\left|b_n(y-x)\right|_p}{\left|x+a_n\right|_p \cdot\left|y+a_n\right|_p}=\left|b_n\right|_p \cdot|x-y|_p$, by induction
    $$
    \begin{aligned}
        \left|\dfrac{b_0}{a_0 +} \cdots \dfrac{b_{n-1}}{a_{n-1} +} \dfrac{b_n}{a_n+x}-\dfrac{b_0}{a_0 +} \cdots \dfrac{b_{n-1}}{a_{n-1} +} \dfrac{b_n}{a_n+y}\right|_p=&\left|\dfrac{b_n}{x+a_n}-\dfrac{b_n}{y+a_n}\right|_p \cdot \prod_{i=0}^{n-1}\left|b_i\right|_p\\ 
        =&|x-y|_p \cdot \prod_{i=0}^n\left|b_i\right|_p \leqslant p^{-n}\left|b_0\right|_p
    \end{aligned}
    $$
    as desired.
\end{proof}
\begin{proposition}\label{proposition_2_2}
    Let $\left(a_n\right)_{n=0}^{\infty}$ be a sequence in $\mathbb{Z}_p^{\times}$ and let $\left(b_n\right)_{n=0}^{\infty}$ be a sequence in $\mathbb{Q}_p$, assume that $\left|b_{n+1}\right|_p<1$ for all $n \in \mathbb{N}$, then there exists a unique $x \in \mathbb{Q}_p$ such that 
    $$\dfrac{b_0}{a_0+} \cdots \dfrac{b_{n-1}}{a_{n-1}+} \dfrac{b_n}{a_n} \longrightarrow x$$
    as $n \rightarrow \infty$ in $p$-adic norm.
\end{proposition}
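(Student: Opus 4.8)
The plan is to show that the sequence of finite convergents
$$C_n := \dfrac{b_0}{a_0+} \cdots \dfrac{b_n}{a_n}$$
is Cauchy in the $p$-adic norm. Since $\mathbb{Q}_p$ is complete the sequence will then converge, and since the $p$-adic metric is Hausdorff the limit is automatically unique, which is the entire content of the statement. The whole argument rests on the contraction estimate already recorded in Proposition \ref{proposition_2_1}.

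Before estimating, I would first verify that every $C_n$ is well-defined, i.e.\ that no denominator vanishes. Working from the inside out, the innermost denominator is $a_n \in \mathbb{Z}_p^{\times}$, a unit. Inductively, if a tail denominator $d$ is a unit (so $|d|_p = 1$), then the next denominator $a_k + b_{k+1}/d$ satisfies $|b_{k+1}/d|_p = |b_{k+1}|_p < 1 = |a_k|_p$, so by the ultrametric inequality $|a_k + b_{k+1}/d|_p = 1$; hence it too is a unit, and in particular nonzero. This simultaneously shows that each $C_n$ is a well-defined element of $\mathbb{Q}_p$ and that each tail value lies in $p\mathbb{Z}_p$, which is precisely what is needed to invoke Proposition \ref{proposition_2_1}.

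The key step is to view $C_n$ and $C_{n+1}$ as the same depth-$n$ continued fraction evaluated at two different values in the final slot:
$$C_n = \dfrac{b_0}{a_0+} \cdots \dfrac{b_n}{a_n+0}, \qquad C_{n+1} = \dfrac{b_0}{a_0+} \cdots \dfrac{b_n}{a_n+\frac{b_{n+1}}{a_{n+1}}}.$$
Setting $x = 0$ and $y = b_{n+1}/a_{n+1}$, both of which lie in $p\mathbb{Z}_p$ because $|b_{n+1}|_p < 1$ and $a_{n+1} \in \mathbb{Z}_p^{\times}$, and noting that $b_1, \ldots, b_n \in p\mathbb{Z}_p$ (again since $|b_i|_p < 1$ for $i \geqslant 1$), Proposition \ref{proposition_2_1} applies and yields
$$\left|C_{n+1} - C_n\right|_p = |x - y|_p \prod_{i=0}^n |b_i|_p \leqslant p^{-n}|b_0|_p.$$

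Since $|C_{n+1} - C_n|_p \leqslant p^{-n}|b_0|_p \to 0$ as $n \to \infty$, and in an ultrametric space control of consecutive differences already forces the Cauchy property, the sequence $(C_n)$ is Cauchy; completeness of $\mathbb{Q}_p$ then produces the limit $x$, and uniqueness is immediate from the Hausdorff property. I expect the only genuine bookkeeping obstacle to be the well-definedness step: verifying by induction that every partial denominator is a $p$-adic unit, so that the convergents exist and the hypotheses of Proposition \ref{proposition_2_1} (all slot values in $p\mathbb{Z}_p$) are met. Once that is in place, convergence follows directly from the contraction estimate.
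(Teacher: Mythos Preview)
Your proposal is correct and follows essentially the same approach as the paper: show $|C_{n+1}-C_n|_p \leqslant p^{-n}|b_0|_p$ via Proposition~\ref{proposition_2_1}, then invoke the ultrametric Cauchy criterion and completeness of $\mathbb{Q}_p$. Your write-up is in fact more careful than the paper's, which omits the well-definedness check and the explicit identification of $x=0$, $y=b_{n+1}/a_{n+1}$ when applying Proposition~\ref{proposition_2_1}.
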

\begin{proof}
    For any $n \in \mathbb{N}$ denote 
    $$P_n:=\dfrac{b_0}{a_0 +} \cdots \dfrac{b_{n-1}}{a_{n-1} +} \dfrac{b_n}{a_n}$$
    By Proposition $2.1$, $\left|P_{n+1}-P_n\right|_p \leqslant p^{-n}\left|b_0\right|_p \rightarrow 0$ as $n \rightarrow \infty$ in $p$-adic norm. Since $\mathbb{Q}_p$ is complete non-Archimedean and $P_n=P_0+\sum_{i=0}^{n-1}\left(P_{i+1}-P_i\right)$ for all $n \in \mathbb{N}$, we have that the sequence $\left(P_n\right)_{n=0}^{\infty}$ converges in $p$-adic norm.
\end{proof}
\begin{proposition}\label{proposition_2_3}
    Let $x \in \mathbb{Q}_p$, and for each $n \in \mathbb{N}$ let 
    $$
    x_n:=\tau^n(x),\ a_n:=\left\lfloor1 / \epsilon\left(x_n\right)\right\rfloor_p,\ b_n:=x_n / \epsilon\left(x_n\right)
    $$
    then 
    $$x=\dfrac{b_0}{a_0 +} \cdots \dfrac{b_{n-1}}{a_{n-1} +} \dfrac{b_n}{a_n+x_{n+1}}$$
    for all $n \in \mathbb{N}$, and 
    $$\dfrac{b_0}{a_0 +} \cdots \dfrac{b_{n-1}}{a_{n-1}+} \dfrac{b_n}{a_n} \rightarrow x$$
    as $n \rightarrow \infty$ in $p$-adic norm.
\end{proposition}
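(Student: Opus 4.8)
The plan is to distill everything from the definitions into a single one-step identity and then propagate it, using Proposition \ref{proposition_2_1} to control the truncation error. First I would record the fundamental recursion $x_n = \dfrac{b_n}{a_n + x_{n+1}}$. This is immediate from the definitions: by Definition \ref{definition_1_3} and the definition of $a_n$ we have $x_{n+1} = \tau(x_n) = \frac{1}{\epsilon(x_n)} - a_n$, so that $a_n + x_{n+1} = \frac{1}{\epsilon(x_n)}$; combining this with $b_n = x_n/\epsilon(x_n)$ gives $\frac{b_n}{a_n + x_{n+1}} = b_n\,\epsilon(x_n) = x_n$, as needed.

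With this identity in hand, the finite continued fraction formula follows by induction on $n$. The base case $n = 0$ is exactly the recursion applied to $x_0 = x$, namely $x = \frac{b_0}{a_0 + x_1}$. For the inductive step I would take the formula for $n$ and substitute $x_{n+1} = \frac{b_{n+1}}{a_{n+1} + x_{n+2}}$ into its innermost partial quotient; in Pringsheim's notation this replaces $\frac{b_n}{a_n + x_{n+1}}$ by $\frac{b_n}{a_n +}\,\frac{b_{n+1}}{a_{n+1} + x_{n+2}}$, which is precisely the formula for $n+1$.

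For the convergence statement I would compare the exact value $x$ with the truncation $P_n := \frac{b_0}{a_0 +}\cdots\frac{b_n}{a_n}$ by applying Proposition \ref{proposition_2_1} with $x_{n+1}$ in the role of $x$ and $0$ in the role of $y$. This requires verifying the hypotheses of that proposition. Each $a_i = \lfloor 1/\epsilon(x_i)\rfloor_p$ is a unit, since $1/\epsilon(x_i) \in \mathbb{Z}_p^{\times}$ has nonzero leading digit and hence $a_i \in \{1,\dots,p-1\} \subseteq \mathbb{Z}_p^{\times}$. Each $b_i = x_i/\epsilon(x_i)$ equals $p^{v(x_i)}$ (or $0$); for $i \geq 1$ the iterate $x_i = \tau^i(x)$ lies in the image of $\tau$, hence in $p\mathbb{Z}_p$, so $b_i \in p\mathbb{Z}_p$, and likewise $x_{n+1} \in p\mathbb{Z}_p$. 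Proposition \ref{proposition_2_1} then gives $|x - P_n|_p = |x_{n+1}|_p \prod_{i=0}^n |b_i|_p \leq p^{-n}|b_0|_p$, which tends to $0$, so $P_n \to x$ in $p$-adic norm.

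The one point that calls for care is the terminating case, where $x_n = 0$ for some $n$ (in particular when $x = 0$). Here $\tau(0) = 0$ forces all subsequent iterates to vanish, so $b_m = 0$ and $a_m = 1$ for every $m \geq n$; the recursion and the finite formula still hold, the truncations $P_N$ stabilize at $x$ for $N \geq n$ so that convergence is trivial, and the partial quotients $a_m = 1$ remain units so that Proposition \ref{proposition_2_1} continues to apply verbatim. Outside this case the whole argument is mechanical once the recursion $x_n = \frac{b_n}{a_n + x_{n+1}}$ is established; the real content is that single identity together with the uniform geometric decay furnished by Proposition \ref{proposition_2_1}.
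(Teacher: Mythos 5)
Your proof is correct and follows essentially the same route as the paper: the one-step recursion $x_n = \dfrac{b_n}{a_n + x_{n+1}}$, induction for the finite formula, and Proposition \ref{proposition_2_1} (applied with $y=0$) to bound $\left|x - P_n\right|_p$ and conclude convergence. Your explicit verification of the hypotheses of Proposition \ref{proposition_2_1} and your treatment of the terminating case $x_n = 0$ are details the paper leaves implicit, but the argument is the same.
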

\begin{proof}
    By the definition of $\tau$, the equality 
    $$x_n=\dfrac{b_n}{a_n+\tau\left(x_n\right)}$$
    holds for all $n \in \mathbb{N}$. Induction on $n \in \mathbb{N}$ then yields that 
    $$x=\dfrac{b_0}{a_0+} \cdots \dfrac{b_{n-1}}{a_{n-1}+} \dfrac{b_n}{a_n+x_{n+1}}$$
    for all $n \in \mathbb{N}$. For each $n \in \mathbb{N}$ denote 
    $$P_n:=\dfrac{b_0}{a_0 +} \cdots \dfrac{b_{n-1}}{a_{n-1} +} \dfrac{b_n}{a_n}$$
    then $\left(P_n\right)_{n=0}^{\infty}$ converges in $p$-adic norm by Proposition \ref{proposition_2_2}. Since by Proposition \ref{proposition_2_1} $\left|x-P_n\right|_p<p^{-n}|x|_p$ for all $n \in \mathbb{N}$, we obtain that $P_n \rightarrow x$ as $n \rightarrow \infty$ in $p$-adic norm.

\end{proof}

\clearpage

\begin{lemma}\label{lemma_2_4}
    Let $e \in \mathbb{Z}, a \in\{1, \ldots, p-1\}$ and $x \in p \mathbb{Z}_p$, then $f\left(\dfrac{p^e}{x+a}\right)=p^e(f(x)+a)$.
\end{lemma}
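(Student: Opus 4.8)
The plan is to reduce everything to a direct computation with the series defining $f$, once the single key identity $\tau\!\left(\frac{p^e}{x+a}\right) = x$ is in hand. Write $y := \frac{p^e}{x+a}$. Since $x \in p\mathbb{Z}_p$ and $a \in \{1, \ldots, p-1\}$, we have $x + a \equiv a \not\equiv 0 \pmod{p}$, so $x + a \in \mathbb{Z}_p^{\times}$ and $|x+a|_p = 1$. Consequently $|y|_p = p^{-e}$ and $\epsilon(y) = |y|_p\, y = p^{-e} \cdot \frac{p^e}{x+a} = \frac{1}{x+a}$, whence $1/\epsilon(y) = x + a$. Because $|(x+a) - a|_p = |x|_p < 1$, the recalled characterization of $\lfloor\cdot\rfloor_p$ gives $\lfloor x+a\rfloor_p = a$, and therefore $\tau(y) = (x+a) - a = x$. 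This is the crux of the argument.

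From $\tau(y) = x$ it follows that $\tau^n(y) = \tau^{n-1}(x)$ for every $n \geqslant 1$. I would then split the series defining $f(y)$ into its $n = 0$ term and the tail $n \geqslant 1$. Two auxiliary computations feed in: the $n=0$ factor is $\tau^0(y)/\epsilon(\tau^0(y)) = y/\epsilon(y) = 1/|y|_p = p^e$, and the $n=0$ digit is $\lfloor 1/\epsilon(y)\rfloor_p = \lfloor x+a\rfloor_p = a$, so the $n=0$ term equals $a\,p^e$. For $n \geqslant 1$ the product factors, using $\tau^m(y) = \tau^{m-1}(x)$ for $m \geqslant 1$ together with the first factor $p^e$, as
$$\prod_{m=0}^n \frac{\tau^m(y)}{\epsilon(\tau^m(y))} = p^e \prod_{m=1}^n \frac{\tau^{m-1}(x)}{\epsilon(\tau^{m-1}(x))} = p^e \prod_{k=0}^{n-1} \frac{\tau^k(x)}{\epsilon(\tau^k(x))}.$$

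Finally I would substitute these into the tail, re-index by $k = n-1$, and recognize the resulting series as $p^e f(x)$:
$$\sum_{n=1}^\infty \left\lfloor \frac{1}{\epsilon(\tau^{n-1}(x))}\right\rfloor_p\, p^e \prod_{k=0}^{n-1} \frac{\tau^k(x)}{\epsilon(\tau^k(x))} = p^e \sum_{k=0}^\infty \left\lfloor \frac{1}{\epsilon(\tau^k(x))}\right\rfloor_p \prod_{j=0}^k \frac{\tau^j(x)}{\epsilon(\tau^j(x))} = p^e f(x).$$
Adding back the $n=0$ term yields $f(y) = a\,p^e + p^e f(x) = p^e(f(x)+a)$, as claimed. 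There is no serious conceptual obstacle; the work lies entirely in the bookkeeping. The step most prone to error is the index shift in the product: one must correctly peel off the $m=0$ factor, which contributes the clean constant $p^e$ rather than a $\tau$-iterate of $x$, and verify that the surviving factors reassemble into exactly the product appearing in the definition of $f(x)$.
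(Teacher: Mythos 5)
Your proposal is correct and follows essentially the same route as the paper's proof: both hinge on the identities $v_p(y)=e$, $\lfloor 1/\epsilon(y)\rfloor_p=a$, and $\tau(y)=x$ for $y=\frac{p^e}{x+a}$, then split off the $n=0$ term of the defining series and re-index the tail to recover $p^e f(x)$. The only difference is that the paper asserts these identities without justification, whereas you verify them (unit-ness of $x+a$, the computation of $\epsilon(y)$, and the digit identification), which is exactly the bookkeeping the paper leaves implicit.
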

\begin{proof}
    Denote $y:=\dfrac{p^e}{x+a}$ then we have that
    $$
    e=v_{p}(y),\ a=\lfloor1 / \epsilon(y)\rfloor_p,\ x=\tau(y)
    $$
    and hence $f(y)=p^e\left(a+\sum_{n=1}^{\infty}\left\lfloor 1 / \in\left(\tau^{n-1}(x)\right)\right\rfloor_p \prod_{m=1}^n \tau^{m-1}(x) / \in\left(\tau^{m-1}(x)\right)\right)=p^e(f(x)+a)$.
\end{proof}
\begin{lemma}\label{lemma_2_5}
    Let $a_0, \ldots, a_n \in\{1, \ldots, p-1\}$ and $e_0, \ldots, e_n \in \mathbb{Z}$, if $e_1, \ldots, e_n>0$, then the formula 
    $$
    f\left(\dfrac{p^{e_0}}{a_0 +} \cdots \dfrac{p^{e_{n-1}}}{a_{n-1} +} \dfrac{p^{e_n}}{a_n+x}\right)=f(x) p^{e_0+\cdots+e_n}+\sum_{i=0}^n a_i p^{e_0+\cdots+e_i}
    $$
    holds for all $x \in p \mathbb{Z}_p$.
\end{lemma}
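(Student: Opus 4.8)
The plan is to prove the identity by induction on $n$, using Lemma \ref{lemma_2_4} both to start the induction and to absorb one extra partial quotient at each step. For the base case $n=0$ the claimed formula reads $f\bigl(\tfrac{p^{e_0}}{a_0+x}\bigr)=f(x)p^{e_0}+a_0 p^{e_0}=p^{e_0}(f(x)+a_0)$, which is exactly the conclusion of Lemma \ref{lemma_2_4} with $e=e_0$ and $a=a_0$ (the hypothesis $e_1,\dots,e_n>0$ being vacuous here). Note that the point $x$ lies in $p\mathbb{Z}_p$, as required by that lemma.

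For the inductive step I would assume the statement for $n-1$ and all admissible data, and fix $a_0,\dots,a_n$ and $e_0,\dots,e_n$ as in the hypothesis. The idea is to peel off the \emph{innermost} partial quotient. Set $x':=\dfrac{p^{e_n}}{a_n+x}$. Because $x\in p\mathbb{Z}_p$ and $a_n\in\{1,\dots,p-1\}$, the denominator $a_n+x$ lies in $\mathbb{Z}_p^{\times}$, so $v_p(x')=e_n>0$ and hence $x'\in p\mathbb{Z}_p$. By the very shape of the Pringsheim continued fraction one may collapse the last level, writing
$$
\dfrac{p^{e_0}}{a_0+}\cdots\dfrac{p^{e_{n-1}}}{a_{n-1}+}\dfrac{p^{e_n}}{a_n+x}=\dfrac{p^{e_0}}{a_0+}\cdots\dfrac{p^{e_{n-1}}}{a_{n-1}+x'}.
$$

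Now I would apply the induction hypothesis to the right-hand side. This is legitimate because $x'\in p\mathbb{Z}_p$ and the remaining exponents $e_1,\dots,e_{n-1}$ are all positive, so the data $(a_0,\dots,a_{n-1})$, $(e_0,\dots,e_{n-1})$ together with the point $x'$ satisfy the hypotheses at level $n-1$. This gives
$$
f\!\left(\dfrac{p^{e_0}}{a_0+}\cdots\dfrac{p^{e_{n-1}}}{a_{n-1}+x'}\right)=f(x')\,p^{e_0+\cdots+e_{n-1}}+\sum_{i=0}^{n-1}a_i\,p^{e_0+\cdots+e_i}.
$$
Finally, Lemma \ref{lemma_2_4} evaluates $f(x')=p^{e_n}(f(x)+a_n)$; substituting this in and absorbing the factor $p^{e_n}$ into the remaining powers turns the two terms into $f(x)\,p^{e_0+\cdots+e_n}+a_n p^{e_0+\cdots+e_n}+\sum_{i=0}^{n-1}a_i p^{e_0+\cdots+e_i}=f(x)\,p^{e_0+\cdots+e_n}+\sum_{i=0}^{n}a_i p^{e_0+\cdots+e_i}$, which is the desired formula.

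The argument is almost entirely formal once the inductive framework is fixed; the only point that genuinely needs care — and which I regard as the crux — is verifying that the substituted point $x'$ lands in $p\mathbb{Z}_p$, since this is precisely the hypothesis that both the continued-fraction collapse and the applications of Lemma \ref{lemma_2_4} and of the induction hypothesis require. This in turn rests on the positivity assumption $e_n>0$, which explains why that condition appears in the statement. Everything else is bookkeeping of the exponents in the telescoping sum.
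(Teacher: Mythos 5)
Your proof is correct and is essentially the paper's own argument: the paper's proof is simply ``Apply Lemma \ref{lemma_2_4}'', and your induction (with the verification that $x'=p^{e_n}/(a_n+x)$ lies in $p\mathbb{Z}_p$ because $a_n+x\in\mathbb{Z}_p^{\times}$ and $e_n>0$) is exactly the bookkeeping that this instruction leaves implicit. No discrepancy to report.
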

\begin{proof}
    Apply Lemma \ref{lemma_2_4}.
\end{proof}

We are now prepared to state the main results of this section.

\begin{theorem}\label{theorem_2_6}
    It holds that $f \circ \tau=\sigma \circ f$.
\end{theorem}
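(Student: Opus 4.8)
The plan is to read off both sides of the identity from the defining series of $f$ in Definition \ref{definition_1_7} and to recognize $\sigma$ acting on that series via Proposition \ref{proposition_1_5}. Fix $x \in \mathbb{Q}_p$ and, following the notation of Proposition \ref{proposition_2_3}, write $x_n := \tau^n(x)$, $a_n := \lfloor 1/\epsilon(x_n)\rfloor_p$ and $b_n := x_n/\epsilon(x_n)$, so that $f(x) = \sum_{n=0}^\infty a_n \prod_{m=0}^n b_m$. First I would record the structural facts needed to invoke Proposition \ref{proposition_1_5}: since $\epsilon(x_n) = |x_n|_p x_n$, one has $b_n = p^{v(x_n)}$ for $x_n \neq 0$ and $b_n = 0$ for $x_n = 0$, so $b_n \in \{p^e \mid e \in \mathbb{Z}\} \cup \{0\}$; since $1/\epsilon(x_n) \in \mathbb{Z}_p^{\times}$, its constant term $a_n$ lies in $\{1, \ldots, p-1\}$; and since $\tau$ sends every point into $p\mathbb{Z}_p$ (being the difference of the unit $1/\epsilon(x_{n-1})$ and its floor), one has $v(x_n) \geq 1$, hence $|b_{n+1}|_p < 1$, for all $n \in \mathbb{N}$.

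With these hypotheses in hand, Proposition \ref{proposition_1_5} applies directly to the series for $f(x)$ and yields
$$\sigma(f(x)) = \sum_{n=1}^\infty a_n \prod_{m=1}^n b_m.$$
Next I would compute $f(\tau(x))$ by exploiting that $\tau$ shifts the orbit: writing $y := \tau(x)$ one has $\tau^n(y) = x_{n+1}$, so the data defining $f(y)$ are exactly $a_{n+1}$ and $b_{n+1}$. Substituting into Definition \ref{definition_1_7} and reindexing by $k = n+1$ gives
$$f(\tau(x)) = \sum_{n=0}^\infty a_{n+1} \prod_{m=0}^n b_{m+1} = \sum_{k=1}^\infty a_k \prod_{m=1}^k b_m,$$
which is the same series obtained for $\sigma(f(x))$. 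Since $x \in \mathbb{Q}_p$ was arbitrary, this establishes $f \circ \tau = \sigma \circ f$.

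The main point requiring care, rather than genuine difficulty, is the verification of the hypotheses of Proposition \ref{proposition_1_5}, and in particular the fact that $\tau(\mathbb{Q}_p) \subseteq p\mathbb{Z}_p$, which is precisely what forces $|b_{n+1}|_p < 1$ and lets the constant term split off cleanly under $\sigma$. I would also note that the degenerate case in which the expansion terminates (i.e.\ $x_N = 0$ for some $N$, whence $b_N = 0$ and the series becomes finite) needs no separate treatment, since Proposition \ref{proposition_1_5} already permits $b_n = 0$ and the reindexing in the computation of $f(\tau(x))$ is unaffected. Lemmas \ref{lemma_2_4} and \ref{lemma_2_5} are not needed for this particular argument, although they provide an equivalent route through the truncation identity underlying Proposition \ref{proposition_2_3}.
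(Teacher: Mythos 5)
Your proof is correct and follows essentially the same route as the paper: apply Proposition \ref{proposition_1_5} to the defining series of $f(x)$ and reindex to recognize the result as $f(\tau(x))$. The paper's own proof is just this computation stated tersely; your added verification of the hypotheses of Proposition \ref{proposition_1_5} (that $a_n \in \{1,\ldots,p-1\}$, $b_n \in \{p^e\} \cup \{0\}$, and $|b_{n+1}|_p < 1$ via $\tau(\mathbb{Q}_p) \subseteq p\mathbb{Z}_p$) is a welcome elaboration rather than a departure.
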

\begin{proof}
    For arbitrary $x \in \mathbb{Q}_p$, by Proposition 1.5 $\sigma(f(x))=\sum_{n=1}^{\infty}\lfloor 1 / \epsilon\left(\tau^n(x)\right) \rfloor_p \prod_{m=1}^n \tau^m(x) / \epsilon\left(\tau^m(x)\right)$ $=\sum_{n=0}^{\infty}\lfloor 1/ \epsilon\left(\tau^{n+1}(x)\right)\rfloor_p \prod_{m=0}^n \tau^{m+1}(x) / \in\left(\tau^{m+1}(x)\right)=f(\tau(x))$.
\end{proof}
\begin{lemma}\label{lemma_2_7}
    The $p$-adic function $f$ is an isometric embedding.
\end{lemma}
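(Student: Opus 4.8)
The plan is to establish the stronger pointwise identity $|f(x)-f(y)|_p=|x-y|_p$ for all $x,y\in\mathbb{Q}_p$, which forces injectivity and hence makes $f$ an isometric embedding. The key simplification is that the partial numerator $\tau^n(x)/\epsilon(\tau^n(x))$ occurring in Definition~\ref{definition_1_7} equals $p^{v(\tau^n(x))}$; writing $e_n:=v(\tau^n(x))$, $a_n:=\lfloor 1/\epsilon(\tau^n(x))\rfloor_p\in\{1,\dots,p-1\}$ and $S_n:=e_0+\cdots+e_n$, the definition becomes $f(x)=\sum_{n=0}^{\infty}a_n p^{S_n}$. Since $\tau^n(x)\in p\mathbb{Z}_p$ for $n\ge 1$, the exponents satisfy $e_n\ge 1$ there, so the valuations $S_n$ strictly increase and the term of least valuation is $a_0p^{e_0}$ with $a_0\ne 0$. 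My first step is therefore the auxiliary fact that $f$ preserves valuation, $v(f(x))=v(x)$ (with $f(0)=0$ coming from $\tau(0)=0$), and in particular $|f(x)|_p=|x|_p$.

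Second, I would reduce to a single-digit base case. For $x\ne y$, let $N$ be the least index where the data $(e_n,a_n)$ of $x$ and of $y$ disagree, recording a vanishing remainder $\tau^n(\cdot)=0$ as a distinguished symbol so that termination also counts as a disagreement; since these data determine the number (Proposition~\ref{proposition_2_3}), $x\ne y$ forces $N<\infty$. Setting $x_N:=\tau^N(x)$ and $y_N:=\tau^N(y)$, Proposition~\ref{proposition_2_3} exhibits $x$ and $y$ as continued fractions sharing a common length-$N$ prefix with tails $x_N,y_N\in p\mathbb{Z}_p$. Because $e_1,\dots,e_{N-1}\ge 1$ and the tails lie in $p\mathbb{Z}_p$, Proposition~\ref{proposition_2_1} yields $|x-y|_p=|x_N-y_N|_p\,p^{-S_{N-1}}$, while Lemma~\ref{lemma_2_5} yields $f(x)-f(y)=\bigl(f(x_N)-f(y_N)\bigr)p^{S_{N-1}}$, the identical affine contribution $\sum_{i<N}a_ip^{S_i}$ of the shared prefix cancelling. (Both identities are read with an empty prefix when $N=0$.) These reduce the claim to $|f(x_N)-f(y_N)|_p=|x_N-y_N|_p$, where by minimality of $N$ the tails $x_N,y_N$ disagree already at index $0$.

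Third, I would settle this base case by a direct valuation computation. If one tail vanishes, say $y_N=0$, then $f(y_N)=0$ and the claim is precisely $|f(x_N)|_p=|x_N|_p$, supplied by the auxiliary fact. If both are nonzero but $v(x_N)\ne v(y_N)$, the auxiliary fact gives $v(f(x_N))\ne v(f(y_N))$, and the ultrametric rule $v(s-t)=\min(v(s),v(t))$ for $v(s)\ne v(t)$ evaluates both sides to the same power of $p$. In the last case $v(x_N)=v(y_N)=e$ with distinct leading digits, write $\alpha:=\epsilon(x_N)$ and $\beta:=\epsilon(y_N)$ in $\mathbb{Z}_p^{\times}$, so $x_N=p^e\alpha$ and $y_N=p^e\beta$; distinctness of $\lfloor 1/\alpha\rfloor_p$ and $\lfloor 1/\beta\rfloor_p$ forces $\alpha\not\equiv\beta\pmod p$, whence $v(x_N-y_N)=e$, while $f(x_N)-f(y_N)=(\lfloor 1/\alpha\rfloor_p-\lfloor 1/\beta\rfloor_p)p^e$ plus terms of valuation exceeding $e$, so $v(f(x_N)-f(y_N))=e$ as well. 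In every case both sides equal $p^{-e}$.

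I expect the main difficulty to be organizational rather than analytic: defining the first index of disagreement so that terminating and non-terminating expansions are treated uniformly, and checking that the positivity hypotheses $e_1,\dots,e_{N-1}>0$ required by Proposition~\ref{proposition_2_1} and Lemma~\ref{lemma_2_5} really hold along the shared prefix. Once the reduction is set up, the only analytic input is the elementary leading-term estimate, which is available precisely because every partial numerator is a power of $p$.
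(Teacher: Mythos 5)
Your proof is correct and takes essentially the same approach as the paper's: both locate the first index at which the continued-fraction data of $x$ and $y$ disagree, factor out the common prefix (Proposition~\ref{proposition_2_1} on the source side, Lemma~\ref{lemma_2_5} on the image side), and conclude by a leading-term valuation comparison at the point of disagreement, with both sides equalling $\max\{|x_N|_p,|y_N|_p\}\,p^{-S_{N-1}}$. Your explicit three-case analysis of the first disagreement is just a more careful rendering of what the paper compresses into its single chain of equalities.
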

\begin{proof}
    Take any distinct $x, y \in \mathbb{Q}_p$. For each $n \in \mathbb{N}$ denote
    $$
    x_n:=\tau^n(x),\ a_n:=\left\lfloor 1 / \epsilon\left(x_n\right)\right\rfloor_p,\ b_n:=x_n / \epsilon\left(x_n\right)
    $$
    and $y_n:=\tau^n(y)$.

    Provided that $\lfloor 1 / \epsilon\left(y_n\right)\rfloor_p=a_n$ and $y_n / \epsilon\left(y_n\right)=b_n$ for all $n \in \mathbb{N}$, then by Proposition \ref{proposition_2_3} $x=y$, contradiction. Therefore 
    $$m:=\sup \left\{n \in \mathbb{N} \mid\left(a_i, b_i\right)=\left(\lfloor 1 / \epsilon\left(y_i\right)\rfloor_p, y_i / \epsilon\left(y_i\right)\right) \forall i \in\{0, \cdots, n\}\right\}$$
    is a finite number.
    By Proposition \ref{proposition_2_1} and Proposition \ref{proposition_2_3}, we obtain that
    $$
    \begin{aligned}
    |f(x)-f(y)|_p & =\exp \left(-\ln (p) \sum_{i=0}^{m+1} \min \left\{v\left(x_n\right), v\left(y_n\right)\right\}\right) \\
    & =\max \left\{\left|x_{m+1}\right|_p,\left|y_{m+1}\right|_p\right\} \prod_{i=0}^{m-1}\left|x_i\right|_p\\ 
    & =\left|x_{m+1}-y_{m+1}\right|_p \cdot \prod_{i=0}^{m-1}\left|b_i\right|_p\\ 
    & =\left|\dfrac{b_0}{a_0 +} \cdots \dfrac{b_{m-1}}{a_{m-1}+}+\dfrac{b_m}{a_m+x_{m+1}}-\dfrac{b_0}{a_0 +} \cdots \dfrac{b_{m-1}}{a_{m-1}+} \dfrac{b_m}{a_m+y_{m+1}}\right|_p\\ 
    & =|x-y|_p.
    \end{aligned}
    $$
    This concludes the proof.
\end{proof}
\begin{lemma}\label{lemma_2_8}
    The $p$-adic function $f$ is surjective.
\end{lemma}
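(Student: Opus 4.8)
The plan is to show that the image $f(\mathbb{Q}_p)$ is simultaneously closed and dense in $\mathbb{Q}_p$; since $\mathbb{Q}_p$ is a complete metric space, this forces $f(\mathbb{Q}_p)=\mathbb{Q}_p$. Closedness is immediate from Lemma \ref{lemma_2_7}: because $f$ is an isometric embedding and $\mathbb{Q}_p$ is complete, the image $f(\mathbb{Q}_p)$ is a complete subspace of $\mathbb{Q}_p$, and a complete subspace of a metric space is closed. So the entire burden of the proof is to produce a dense subset of $\mathbb{Q}_p$ inside the image, and this is exactly what Lemma \ref{lemma_2_5} has been set up to supply.

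For the density step I would first record that $f(0)=0$: by convention $\epsilon(0)=1$, so $\tau(0)=1/\epsilon(0)-\lfloor 1/\epsilon(0)\rfloor_p=1-1=0$, whence $\tau^n(0)=0$ for all $n$ and every summand in Definition \ref{definition_1_7} carries the factor $0/\epsilon(0)=0$, giving $f(0)=0$. Substituting $x=0\in p\mathbb{Z}_p$ into Lemma \ref{lemma_2_5} then shows that $\sum_{i=0}^n a_i p^{e_0+\cdots+e_i}\in f(\mathbb{Q}_p)$ for all $a_0,\ldots,a_n\in\{1,\ldots,p-1\}$, all $e_0\in\mathbb{Z}$, and all $e_1,\ldots,e_n>0$. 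The one mildly fiddly point is to recognize that, as $(a_i)$ and $(e_i)$ range over these values, these numbers are precisely the elements of $\mathbb{Q}_p$ with finitely many nonzero base-$p$ digits: given such a number with nonzero digits $a_0,\ldots,a_n$ at the strictly increasing positions $k_0<\cdots<k_n$, one sets $e_0=k_0\in\mathbb{Z}$ and $e_i=k_i-k_{i-1}\geq 1$, so that $e_0+\cdots+e_i=k_i$ and the sum reproduces the given number.

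It then remains to note that the finite-digit numbers are dense: for nonzero $y=\sum_{k\geq v(y)}c_k p^k$ the truncations $\sum_{k=v(y)}^{M}c_k p^k$ are finite-digit numbers with $|y-\sum_{k=v(y)}^{M}c_k p^k|_p\to 0$ as $M\to\infty$, and $0=f(0)$ is already in the image. Hence $f(\mathbb{Q}_p)$ contains a dense subset and is closed, so $f(\mathbb{Q}_p)=\mathbb{Q}_p$. I expect no serious obstacle here; the only care needed is the bookkeeping that translates Lemma \ref{lemma_2_5} into ``every finite-digit number is attained'' together with the routine truncation argument. A more hands-on alternative would bypass density altogether: given $y\in\mathbb{Q}_p^{\times}$ with nonzero digits $a_n$ at positions $S_n$, set $b_0=p^{S_0}$ and $b_n=p^{S_n-S_{n-1}}$, form the continued fraction $\frac{b_0}{a_0+}\frac{b_1}{a_1+}\cdots$, which converges to some $x$ by Proposition \ref{proposition_2_2}, and verify by the computation underlying Proposition \ref{proposition_2_3} that $\tau^n(x)$ recovers the data $(a_n,b_n)$, so that $f(x)=y$; but the closed-plus-dense argument is shorter and reuses the lemmas already in place.
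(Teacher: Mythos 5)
Your proof is correct, but it takes a different route from the paper's. The paper constructs an explicit preimage for each target $y \in \mathbb{Q}_p$: it reads off data $(a_n, b_n)$ from the shift orbit $(\sigma^n(y))_{n=0}^{\infty}$, forms the continued fraction $\frac{b_0}{a_0+}\frac{b_1}{a_1+}\cdots$, invokes Proposition \ref{proposition_2_2} to obtain its limit $x$, and then computes $f(x)=y$ by combining the continuity of $f$ (from Lemma \ref{lemma_2_7}), Lemma \ref{lemma_2_5}, and Proposition \ref{proposition_1_6} --- this is precisely the ``hands-on alternative'' you sketch in your final sentence. Your main argument instead replaces the explicit construction by the topological observation that an isometric image of the complete space $\mathbb{Q}_p$ is complete, hence closed, so that only density of the image remains to be shown; you get density from Lemma \ref{lemma_2_5} evaluated at $x=0$ (after verifying $f(0)=0$, a fact the paper defers to Proposition \ref{proposition_3_1} but which your proof legitimately needs and correctly derives) together with the standard fact that $p$-adic numbers with finitely many nonzero digits are dense. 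The two proofs consume the same essential ingredients --- Lemma \ref{lemma_2_5}, Lemma \ref{lemma_2_7}, completeness of $\mathbb{Q}_p$ --- but yours packages the limiting argument inside the closed-plus-dense formalism, thereby avoiding Proposition \ref{proposition_2_2} and Proposition \ref{proposition_1_6} entirely, at the cost of the digit bookkeeping that identifies the values of finite continued fractions with finite-digit numbers. What the paper's version buys is an explicit recipe for the inverse map (the preimage of $y$ is the continued fraction built from the shift orbit of $y$), which is more informative; what yours buys is independence from the convergence machinery and a cleaner separation between the algebraic input (Lemma \ref{lemma_2_5}) and the topological input (completeness).
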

\begin{proof}
    Take an arbitrary $y \in \mathbb{Q}_p$. For each $n \in \mathbb{N}$ denote $a_n:=\left\lfloor1 /\epsilon\left(\sigma^n(y)\right)\right\rfloor_p$ and $b_n:=\sigma^n(y) / \epsilon\left(\sigma^n(y)\right)$. By Proposition \ref{proposition_2_2} there exists $x \in \mathbb{Q}_p$ such that 
    $$\dfrac{b_0}{a_0 +} \cdots \dfrac{b_{n-1}}{a_{n-1}+} \dfrac{b_n}{a_n} \rightarrow x$$
    as $n \rightarrow \infty$ in $p$-adic norm. By Lemma \ref{lemma_2_7} in particular $f$ is continuous. Therefore by Lemma \ref{lemma_2_5} and Proposition \ref{proposition_1_6}, $f(x)=\sum_{n=0}^{\infty} a_n \prod_{m=0}^n b_m=y$.
\end{proof}
\begin{theorem}\label{theorem_2_9}
    The $p$-adic function $f$ is an isometry.
\end{theorem}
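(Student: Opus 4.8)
The plan is to read this off directly from the two preceding lemmas, since the entire analytic content has already been established. By an isometry of $\mathbb{Q}_p$ I mean a distance-preserving bijection, so it suffices to verify that $f$ is bijective and preserves the $p$-adic metric. Lemma \ref{lemma_2_7} already supplies the metric-preserving property: for all $x,y \in \mathbb{Q}_p$ one has $|f(x)-f(y)|_p = |x-y|_p$.

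It remains only to check bijectivity. Surjectivity is exactly the statement of Lemma \ref{lemma_2_8}. For injectivity I would observe that it is a free consequence of the isometric-embedding property: if $f(x)=f(y)$, then $|x-y|_p = |f(x)-f(y)|_p = 0$, whence $x=y$. Combining these observations, $f$ is a distance-preserving bijection, that is, an isometry, which is precisely the assertion of the theorem.

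I do not anticipate any genuine obstacle here: the substance of the result lies in Lemma \ref{lemma_2_7} (isometric embedding) and Lemma \ref{lemma_2_8} (surjectivity), and Theorem \ref{theorem_2_9} is merely their formal conjunction, together with the trivial remark that a distance-preserving map is automatically injective in a metric space. The only point worth stating explicitly is the convention that \emph{isometry} here demands surjectivity on top of the embedding property of Lemma \ref{lemma_2_7}, so that it is Lemma \ref{lemma_2_8} that upgrades the embedding to a full isometry.
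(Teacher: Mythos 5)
Your proof is correct and is exactly the paper's argument: the paper's proof of Theorem \ref{theorem_2_9} is simply ``Combine Lemma \ref{lemma_2_7} and Lemma \ref{lemma_2_8}.'' Your additional remark that injectivity follows for free from the distance-preserving property just makes explicit what the paper leaves implicit.
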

\begin{proof}
    Combine Lemma \ref{lemma_2_7} and Lemma \ref{lemma_2_8}.
\end{proof}

\newpage
\section{A Criterion for Rationality of \texorpdfstring{$p$-adic } -Numbers}
\label{sec3}

\quad

\begin{proposition}\label{proposition_3_1}
    The integers 0 and $-p$ are fixed points of $f$.
\end{proposition}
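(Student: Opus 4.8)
The plan is to first verify that both $0$ and $-p$ are fixed points of the map $\tau$, and then to feed this periodicity into the defining series for $f$ from Definition \ref{definition_1_7}. Since $f(x)$ is assembled entirely out of the forward orbit $(\tau^n(x))_{n=0}^\infty$, once I know that $\tau$ fixes a point the infinite product and sum collapse to something evaluable in closed form.

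First I would compute the relevant invariants at $0$. By convention $\epsilon(0)=1$, so $1/\epsilon(0)=1$, whence $\lfloor 1/\epsilon(0)\rfloor_p=1$ and $\tau(0)=1-1=0$; thus $0$ is fixed by $\tau$. Consequently $\tau^n(0)=0$ for every $n$, and in particular the factor $\tau^0(0)/\epsilon(\tau^0(0))=0/1=0$ occurs in every product $\prod_{m=0}^n \tau^m(0)/\epsilon(\tau^m(0))$. Every summand of the series defining $f(0)$ therefore vanishes, giving $f(0)=0$ directly.

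Next I would treat $-p$. Here $v(-p)=1$, so $|-p|_p=p^{-1}$ and $\epsilon(-p)=p^{-1}\cdot(-p)=-1$; hence $1/\epsilon(-p)=-1$, and using the $p$-adic expansion $-1=(p-1)+(p-1)p+(p-1)p^2+\cdots$ I get $\lfloor 1/\epsilon(-p)\rfloor_p=p-1$. Therefore $\tau(-p)=-1-(p-1)=-p$, so $-p$ is also fixed by $\tau$. To evaluate $f(-p)$ I would note that for every $n$ one has $\lfloor 1/\epsilon(\tau^n(-p))\rfloor_p=p-1$ and $\tau^m(-p)/\epsilon(\tau^m(-p))=(-p)/(-1)=p$, so that $\prod_{m=0}^n \tau^m(-p)/\epsilon(\tau^m(-p))=p^{n+1}$, and hence $f(-p)=(p-1)\sum_{n=0}^\infty p^{n+1}=(p-1)p\cdot\tfrac{1}{1-p}=-p$, summing the geometric series in $\mathbb{Q}_p$.

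There is essentially no deep obstacle here; the only points requiring care are the two elementary $p$-adic computations $\epsilon(-p)=-1$ and $\lfloor -1\rfloor_p=p-1$, together with the convergence of $\sum_{n=0}^\infty p^n=\tfrac{1}{1-p}$, which is immediate since $|p|_p<1$. As a more structural alternative to the series computation for $-p$, I could observe that the relation $\tau(-p)=-p$, with $b_0=-p/\epsilon(-p)=p$ and $a_0=p-1$, expresses $-p=\tfrac{p}{(-p)+(p-1)}$, so Lemma \ref{lemma_2_4} (applicable since $-p\in p\mathbb{Z}_p$) yields $f(-p)=p\bigl(f(-p)+(p-1)\bigr)$; solving this linear equation gives $f(-p)=-p$ without invoking the geometric series.
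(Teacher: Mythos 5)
Your proof is correct and takes essentially the same route as the paper's: both arguments rest on $0$ and $-p$ being fixed points of $\tau$ (the paper encodes the latter fact as the periodic continued fraction expansion $-p=\frac{p}{p-1+}\,\frac{p}{p-1+}\cdots$) and then evaluate the defining series for $f$, which you simply carry out in more explicit detail. Your alternative derivation of $f(-p)=-p$ by solving the linear equation $f(-p)=p\left(f(-p)+p-1\right)$ obtained from Lemma \ref{lemma_2_4} is a nice bonus, but the core argument is the same.
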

\begin{proof}
    By the very definition of $f$, we have $f(0)=0$. Since 
    $$-p=\sum_{n=1}^{\infty}(p-1) p^n=\dfrac{p}{p-1+} \cdots \dfrac{p}{p-1+} \dfrac{p}{p-1+\cdots}$$
    we obtain that $f(-p)=-p$.
\end{proof}
\begin{theorem}\label{theorem_3_2}
    Let $x \in \mathbb{Q}$, then there exists $n \in \mathbb{N}$ such that $\tau^n(x)=0$ or $\tau^n(x)=-p$.
\end{theorem}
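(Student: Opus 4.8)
The plan is to exhibit a strictly descending nonnegative-integer ``height'' along the $\tau$-orbit, forcing the orbit to reach a height-minimal point, which I will identify with the two fixed points. First I would dispose of the trivial cases $x=0$ and $x=-p$ by taking $n=0$, and otherwise track the orbit $x_n:=\tau^n(x)$, all of whose terms are rational, and each of which lies in $p\mathbb{Z}_p$ whenever $n\geq 1$ (indeed every image of $\tau$ is a fractional part, so $v(x_{n+1})\geq 1$ for all $n$). For each $x_n\neq 0$ I would write $\epsilon(x_n)=r_n/s_n$ in lowest terms with $s_n\geq 1$; since $\epsilon(x_n)\in\mathbb{Z}_p^{\times}$, both $r_n$ and $s_n$ are prime to $p$ and $r_n\neq 0$. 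The key object is the height $H(x_n):=\max\{|r_n|,s_n\}$.

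Next I would compute the effect of a single step of $\tau$ on this representation. Setting $a:=\lfloor 1/\epsilon(x_n)\rfloor_p=\lfloor s_n/r_n\rfloor_p\in\{1,\dots,p-1\}$, we have $s_n\equiv a r_n\pmod p$, so $\tau(x_n)=s_n/r_n-a=(s_n-a r_n)/r_n$ with $p\mid(s_n-a r_n)$. Since $\gcd(s_n-a r_n,\,r_n)=\gcd(s_n,r_n)=1$, factoring the power of $p$ out of the numerator shows that $\epsilon(x_{n+1})$ has denominator $s_{n+1}=|r_n|$ and numerator of absolute value $|s_n-a r_n|/p^{v(x_{n+1})}$. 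Because $v(x_{n+1})\geq 1$ the new numerator is bounded by $(s_n+(p-1)|r_n|)/p\leq H(x_n)$, while the new denominator $|r_n|\leq H(x_n)$; hence $H$ is non-increasing along the orbit.

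The crux is to upgrade this to a genuine descent. I would split into cases according to $|r_n|$ versus $s_n$, noting that $|r_n|=s_n$ is impossible once $H(x_n)>1$ by coprimality. If $|r_n|<s_n$, then the new denominator $|r_n|$ and the new numerator $\leq(s_n+(p-1)|r_n|)/p$ are both strictly below $s_n=H(x_n)$, so $H$ drops in one step. If $|r_n|>s_n$, the height is momentarily preserved, since $s_{n+1}=|r_n|=H(x_n)$; but then the new numerator is bounded by $(s_n+(p-1)|r_n|)/p<|r_n|$, so at $x_{n+1}$ we are back in the first case and $H$ drops at the following step. Thus $H$ strictly decreases within every two steps as long as it exceeds $1$; being a positive integer, it must reach $1$ at some finite stage $N$, where $\epsilon(x_N)=\pm 1$, i.e. $x_N=\pm p^{v(x_N)}$. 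A direct computation then gives $\tau(x_N)=1-\lfloor 1\rfloor_p=0$ when $\epsilon(x_N)=1$, and $\tau(x_N)=-1-\lfloor -1\rfloor_p=-p$ when $\epsilon(x_N)=-1$, which is the assertion.

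I expect the main obstacle to be exactly the stalling case $|r_n|>s_n$, in which the height fails to decrease in a single step; the two-step argument, together with the coprimality fact ruling out $|r_n|=s_n$ whenever $H>1$, is what makes the descent go through. The sign bookkeeping (keeping denominators positive while numerators may change sign) is the only other point requiring care, but it does not affect the height, so the estimates above are unaffected.
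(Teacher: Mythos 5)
Your proof is correct, but it follows a genuinely different route from the paper: the paper does not prove this theorem internally at all, it simply invokes the Satz of Bundschuh, citing [3], and records the statement as a direct consequence. Your height-descent argument is, in effect, a self-contained elementary proof of the relevant part of that result. The steps all check: writing $\epsilon(x_n)=r_n/s_n$ in lowest terms with $r_n,s_n$ prime to $p$, one application of $\tau$ produces $(s_n-ar_n)/r_n$ with $p\mid(s_n-ar_n)$, and coprimality gives $s_{n+1}=|r_n|$ together with the bound $|r_{n+1}|\leqslant\bigl(s_n+(p-1)|r_n|\bigr)/p$, where the crucial division by $p$ comes precisely from $v(x_{n+1})\geqslant 1$; the case split $|r_n|<s_n$ versus $|r_n|>s_n$ then forces $H=\max\{|r_n|,s_n\}$ to drop strictly within two steps, the stalling case being harmless and the case $|r_n|=s_n$ being excluded by coprimality unless $H=1$; finally $H(x_N)=1$ gives $\epsilon(x_N)=\pm 1$, and the computations $\tau(x_N)=1-\lfloor 1\rfloor_p=0$ and $\tau(x_N)=-1-\lfloor -1\rfloor_p=-1-(p-1)=-p$ land on the two fixed points of Proposition \ref{proposition_3_1}. (Only trivial bookkeeping is left implicit: if some $x_{n}=0$ mid-descent the conclusion holds at once, and the initial point $x_0=x$ need not lie in $p\mathbb{Z}_p$, which does not matter since the estimate only uses $v(x_{n+1})\geqslant 1$.) What your approach buys is self-containedness and even an effective bound --- the orbit reaches $\{0,-p\}$ in at most roughly $2H(x)$ iterations --- whereas the paper's citation buys brevity by deferring entirely to the literature. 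Your argument could serve as a stand-alone replacement for the reference to [3] in this paper.
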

\begin{proof}
    This is a direct consequence of the Satz in [3].
\end{proof}
\begin{proposition}\label{proposition_3_3}
    Let $x \in \mathbb{Q}$, then $f(x) \in \mathbb{Z}[\frac{1}{p}]$.
\end{proposition}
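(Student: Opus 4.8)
The plan is to use Theorem \ref{theorem_3_2} to collapse the infinite series defining $f$ into the \emph{finite} continued fraction evaluation provided by Lemma \ref{lemma_2_5}. The point is that rationality forces the Schneider expansion to terminate, so $f(x)$ becomes a finite sum of integer multiples of integer powers of $p$.

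First I would fix $x \in \mathbb{Q}$ and, by Theorem \ref{theorem_3_2}, choose $N \in \mathbb{N}$ minimal with $\tau^N(x) \in \{0, -p\}$. Writing $x_n := \tau^n(x)$, $a_n := \lfloor 1/\epsilon(x_n)\rfloor_p$ and $e_n := v(x_n)$, I would observe directly from Definition \ref{definition_1_1} that the partial numerators are exact powers of $p$, namely $x_n/\epsilon(x_n) = p^{e_n}$ for $n < N$ (here $x_n \neq 0$). Proposition \ref{proposition_2_3}, truncated at level $N-1$, then gives the finite representation
$$x = \dfrac{p^{e_0}}{a_0 +} \cdots \dfrac{p^{e_{N-1}}}{a_{N-1} + x_N}.$$
Next I would verify the hypotheses of Lemma \ref{lemma_2_5} with tail $x_N$: since $1/\epsilon(x_n) \in \mathbb{Z}_p^{\times}$ its digit $a_n$ is a nonzero residue, so $a_0, \ldots, a_{N-1} \in \{1, \ldots, p-1\}$; for $n \geq 1$ the definition of $\tau$ forces $x_n \in p\mathbb{Z}_p$, and by minimality of $N$ the values $x_1, \ldots, x_{N-1}$ are nonzero, so $e_1, \ldots, e_{N-1} \geq 1 > 0$; and the tail satisfies $x_N \in p\mathbb{Z}_p$ because both $0$ and $-p$ lie in $p\mathbb{Z}_p$. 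Lemma \ref{lemma_2_5} then yields
$$f(x) = f(x_N)\, p^{e_0 + \cdots + e_{N-1}} + \sum_{i=0}^{N-1} a_i\, p^{e_0 + \cdots + e_i}.$$

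To conclude I would invoke Proposition \ref{proposition_3_1}, which gives $f(x_N) \in \{0, -p\} \subseteq \mathbb{Z}[\frac{1}{p}]$; since each $a_i$ is an integer and each exponent $e_0 + \cdots + e_i$ is an integer, every summand lies in $\mathbb{Z}[\frac{1}{p}]$, whence $f(x) \in \mathbb{Z}[\frac{1}{p}]$. The degenerate case $N = 0$ (where $x \in \{0, -p\}$ and the sum is empty) follows immediately from Proposition \ref{proposition_3_1}. I do not expect a genuine obstacle here: the essential mechanism—termination of the expansion—is handed over entirely by Theorem \ref{theorem_3_2}, so the only real care needed is the bookkeeping in checking Lemma \ref{lemma_2_5}'s hypotheses, chiefly that the partial numerators are exact powers of $p$ and that the intermediate exponents $e_1, \ldots, e_{N-1}$ are strictly positive.
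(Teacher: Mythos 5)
Your proposal is correct and follows essentially the same route as the paper's own proof: terminate the Schneider expansion via Theorem \ref{theorem_3_2}, represent $x$ as a finite continued fraction with tail $x_N \in \{0,-p\}$ via Proposition \ref{proposition_2_3}, evaluate $f$ on it via Lemma \ref{lemma_2_5}, and finish with the fixed-point Proposition \ref{proposition_3_1}. The only difference is cosmetic: you index the terminating step as $N$ where the paper uses $m+1$, and you spell out the hypothesis checks (exponents $e_1,\ldots,e_{N-1}>0$, digits in $\{1,\ldots,p-1\}$, tail in $p\mathbb{Z}_p$) that the paper leaves implicit.
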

\begin{proof}
    By Proposition \ref{proposition_3_1} $f(0)=0$ is an integer.

    Fix an arbitrary $x \in \mathbb{Q}^{\times}$, and for each $n \in \mathbb{N}$ denote 
    $$
    x_n:=\tau^n(x),\ a_n:=\lfloor 1 / \epsilon\left(x_n\right) \rfloor_p,\ e_n:=v\left(x_n\right)
    $$

    By Theorem \ref{theorem_3_2} there exists natural number $m \in \mathbb{N}$ such that $x_{m+1}=0$ or $x_{m+1}=-p$, and $e_n<\infty$ for all $n \in\{0, \ldots, m\}$.

    Since by Proposition \ref{proposition_2_3} 
    $$x=\dfrac{p^{e_0}}{a_0 +} \cdots \dfrac{p^{e_{m-1}}}{a_{m-1}+} \dfrac{p^{e_m}}{a_m+x_{m+1}}$$ 
    and by Proposition \ref{proposition_3_1}. $f\left(x_{m+1}\right)=x_{m+1}$, Lemma \ref{lemma_2_5} yields that $$f(x)=p^{e_0+\cdots+e_m}x_{m+1}+\sum_{i=0}^m a_i p^{e_0+\cdots+e_i} \in \mathbb{Z}[1/p]$$ 
    as desired.
\end{proof}
\begin{proposition}\label{proposition_3_4}
    For any $y \in \mathbb{Z}[\frac{1}{p}]$, there exists $x \in \mathbb{Q}$ such that $y=f(x)$.
\end{proposition}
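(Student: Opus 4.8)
The plan is to use the bijectivity of $f$ (Theorem \ref{theorem_2_9}) to set $x:=f^{-1}(y)$ and then prove $x\in\mathbb{Q}$. By the conjugacy $f\circ\tau=\sigma\circ f$ (Theorem \ref{theorem_2_6}) one has $f(\tau^n(x))=\sigma^n(y)$ for every $n$, and since $f(0)=0$, $f(-p)=-p$ (Proposition \ref{proposition_3_1}) and $f$ is injective, the $\tau$-orbit of $x$ meets $\{0,-p\}$ precisely when the $\sigma$-orbit of $y$ does. Once I know $\tau^{m+1}(x)\in\{0,-p\}$ for some $m$, Proposition \ref{proposition_2_3} writes $x=\dfrac{p^{e_0}}{a_0+}\cdots\dfrac{p^{e_m}}{a_m+\tau^{m+1}(x)}$ with $e_i=v(\tau^i(x))\in\mathbb{Z}$ and $a_i=\lfloor 1/\epsilon(\tau^i(x))\rfloor_p\in\{1,\dots,p-1\}$ (here minimality of $m$ guarantees $\tau^i(x)\neq 0$, so each $e_i$ is finite); this is a finite continued fraction whose entries and whose tail value $\tau^{m+1}(x)\in\{0,-p\}$ are all rational, so $x\in\mathbb{Q}$. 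Thus the entire statement reduces to showing that the $\sigma$-orbit of any $y\in\mathbb{Z}[\frac{1}{p}]$ reaches $0$ or $-p$.

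To analyse this I would first record that $\mathbb{Z}[\frac{1}{p}]$ is forward-invariant under $\sigma$ and that the dynamics quickly enters $\mathbb{Z}$: for nonzero $y\in\mathbb{Z}[\frac{1}{p}]$ the unit $\epsilon(y)=p^{-v(y)}y$ is in fact a rational integer coprime to $p$, so $\lfloor\epsilon(y)\rfloor_p$ is just its residue modulo $p$ and $\sigma(y)=\epsilon(y)-\lfloor\epsilon(y)\rfloor_p\in p\mathbb{Z}$. The same computation gives $\sigma(\mathbb{Z}\setminus\{0\})\subseteq p\mathbb{Z}$, so after a single step the orbit lies entirely in $\mathbb{Z}$, and it remains to prove that the $\sigma$-orbit of every nonzero integer reaches $0$ or $-p$.

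For the latter I would use the positive integer $N(z):=|\epsilon(z)|$ (ordinary real absolute value) as a monovariant. The sign of $\epsilon$ is preserved on nonzero integers: if $\epsilon(z)>0$ then $\sigma(z)=\epsilon(z)-\lfloor\epsilon(z)\rfloor_p\in\{0\}\cup p\mathbb{Z}_{>0}$ with $\sigma(z)<\epsilon(z)$, so the value strictly decreases and the orbit reaches $0$; if $\epsilon(z)<0$ then $\sigma(z)<0$ and a one-line estimate gives $N(\sigma(z))\le(N(z)+\lfloor\epsilon(z)\rfloor_p)/p\le N(z)$, with strict decrease unless $N(z)=1$, i.e. $\epsilon(z)=-1$, in which case $\sigma(z)=-p$. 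Since a strictly decreasing sequence of positive integers is finite, the orbit stabilises at the fixed configuration in finitely many steps, reaching $0$ in the positive case and $-p$ in the negative case. I expect this termination of the integer $\sigma$-dynamics to be the only genuinely new ingredient: the reduction in the first paragraph and the transfer through the conjugacy are pure bookkeeping over the already established results, and the edge case $y=0$ is immediate from $f(0)=0$.
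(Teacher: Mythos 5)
Your proof is correct, but it takes a genuinely different route from the paper. The paper's proof is a direct construction that never inverts $f$: given $y\in\mathbb{Z}[1/p]$, it reads off a finite expansion $y=\sum_{i=0}^{n}a_i p^{e_0+\cdots+e_i}$ when $y>0$ (with the extra term $-p^{e_0+\cdots+e_n+1}=f(-p)\,p^{e_0+\cdots+e_n}$ when $y<0$), forms the finite continued fraction $x'=\frac{p^{e_0}}{a_0+}\cdots\frac{p^{e_n}}{a_n}$ (tail $a_n-p$ in the negative case), and verifies $f(x')=y$ by Lemma \ref{lemma_2_5} and Proposition \ref{proposition_3_1}; it uses neither surjectivity of $f$ nor the conjugacy. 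You instead set $x:=f^{-1}(y)$ via Theorem \ref{theorem_2_9}, transport the orbit question through Theorem \ref{theorem_2_6} using Proposition \ref{proposition_3_1} and injectivity, and then supply the one genuinely new ingredient: a termination argument for the $\sigma$-dynamics on $\mathbb{Z}[1/p]$. I checked that argument and it holds: for nonzero $y\in\mathbb{Z}[1/p]$ the unit $\epsilon(y)$ is an ordinary integer prime to $p$, so the orbit enters $p\mathbb{Z}$ in one step; on positive integers $N=|\epsilon(\cdot)|$ strictly decreases until the orbit hits $0$, and on negative integers $N(\sigma(z))\le (N(z)+p-1)/p\le N(z)$ with strict decrease unless $N(z)=1$, in which case $\epsilon(z)=-1$ and $\sigma(z)=-p$; the final passage to $x\in\mathbb{Q}$ via the finite continued fraction of Proposition \ref{proposition_2_3} with rational entries and rational tail is also sound (minimality of $m$ guarantees each $e_i$ is finite, and all denominators are $p$-adic units). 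As for what each approach buys: the paper's construction is shorter and stays within the machinery of Section \ref{sec2}; yours is heavier but proves strictly more, namely the implication $f(x)\in\mathbb{Z}[1/p]\Rightarrow x\in\mathbb{Q}$ for arbitrary $x\in\mathbb{Q}_p$, which is one half of Theorem \ref{theorem_3_5} and which the paper only recovers afterwards by combining Propositions \ref{proposition_3_3} and \ref{proposition_3_4} with bijectivity. Moreover, your monovariant lemma is an elementary, self-contained $\sigma$-analogue of Theorem \ref{theorem_3_2}, which the paper obtains for $\tau$ only by citing Bundschuh's Satz in [3] --- though note you only prove it on $\mathbb{Z}[1/p]$, which is much easier than, and does not replace, the cited result on all of $\mathbb{Q}$ that the paper still needs for Proposition \ref{proposition_3_3}.
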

\begin{proof}
    If $y=0$ then $f(0)=0=y$ by Proposition \ref{proposition_3_1}. 
    
    If $y>0$, then there exists $a_0, \ldots, a_n \in\{1, \ldots, p-1\}$ and $e_0, \ldots, e_n \in \mathbb{Z}$ with $e_1, \ldots, e_n>0$, such that $\sum_{i=0}^n a_i p^{e_0+\cdots+e_i}$. Let 
    $$x':=\dfrac{p^{e_0}}{a_0+} \cdots \dfrac{p^{e_{n-1}}}{a_{n-1}+} \dfrac{p^{e_n}}{a_n} \in \mathbb{Q}$$
    then by Lemma \ref{lemma_2_5}, $y=f(x')$.

    If $y<0$, then there exists $a_0, \ldots, a_n \in\{1, \ldots, p-1\}$ and $e_0, \ldots, e_n \in \mathbb{Z}$ with $e_1, \ldots, e_n>0$, such that $y=\sum_{i=0}^n a_i p^{e_0+\cdots+e_i}-p^{e_0+\cdots+e_n+1}=f(-p) p^{e_0+\cdots+e_n}+\sum_{i=0}^n a_i p^{e_0+\cdots+e_i}$. Let 
    $$x'':=\dfrac{p^{e_0}}{a_0 +} \cdots \dfrac{p^{e_{n-1}}}{a_{n-1}+}+\dfrac{p^{e_n}}{a_n-p} \in \mathbb{Q}$$
    then by Lemma \ref{lemma_2_5}, $y=f(x'')$.
\end{proof}
\begin{theorem}\label{theorem_3_5}
    Let $x \in \mathbb{Q}_p$ be a p-adic number, then $x \in \mathbb{Q}$ if and only if $f(x) \in \mathbb{Z}[\frac{1}{p}]$. Furthermore, it holds that $f(\mathbb{Q})=\mathbb{Z}[\frac{1}{p}]$.
\end{theorem}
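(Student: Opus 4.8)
The plan is to assemble the theorem from Propositions \ref{proposition_3_3} and \ref{proposition_3_4} together with the injectivity of $f$ furnished by Theorem \ref{theorem_2_9}. I would begin with the equality $f(\mathbb{Q})=\mathbb{Z}[\frac{1}{p}]$, since it is the most direct consequence of what precedes. Proposition \ref{proposition_3_3} gives the inclusion $f(\mathbb{Q})\subseteq\mathbb{Z}[\frac{1}{p}]$, while Proposition \ref{proposition_3_4} gives the reverse inclusion $\mathbb{Z}[\frac{1}{p}]\subseteq f(\mathbb{Q})$; combining the two yields the claimed identity of sets.

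For the biconditional, the forward implication is immediate: if $x\in\mathbb{Q}$ then $f(x)\in\mathbb{Z}[\frac{1}{p}]$ by Proposition \ref{proposition_3_3}. The content lies entirely in the converse. Here I would exploit that $f$ is injective, which follows from Theorem \ref{theorem_2_9}, since an isometry is in particular distance-preserving and hence one-to-one. Given $x\in\mathbb{Q}_p$ with $f(x)\in\mathbb{Z}[\frac{1}{p}]$, Proposition \ref{proposition_3_4} produces some $x'\in\mathbb{Q}$ with $f(x')=f(x)$; injectivity of $f$ then forces $x=x'$, so that $x\in\mathbb{Q}$, completing the converse.

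There is no serious analytic obstacle remaining, as the substantive work has already been carried out in the preceding results: Proposition \ref{proposition_3_3} encodes the eventual termination of the continued fraction expansion of a rational via Theorem \ref{theorem_3_2}, and Proposition \ref{proposition_3_4} its realizability from an arbitrary element of $\mathbb{Z}[\frac{1}{p}]$. The only point requiring care is the logical pivot in the converse direction: one must not attempt to read off $x\in\mathbb{Q}$ directly from the shape of the expansion of $f(x)$, but rather transport rationality backward through the injective map $f$. Once injectivity is invoked, the argument is purely formal, and I would present it as a short synthesis rather than a fresh computation.
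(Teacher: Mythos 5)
Your proposal is correct and follows exactly the same route as the paper: the paper's own proof is a one-line synthesis citing Theorem \ref{theorem_2_9} (bijectivity, of which you use only injectivity), Proposition \ref{proposition_3_3}, and Proposition \ref{proposition_3_4}. You have merely made explicit the logical pivot---transporting rationality backward through the injective $f$---that the paper leaves implicit.
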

\begin{proof}
    Notice that by Theorem \ref{theorem_2_9} in particular $f$ is a bijection, the desired result then follows from Proposition \ref{proposition_3_3} and Proposition \ref{proposition_3_4}.
\end{proof}

\vspace{25pt}

University of Warwick, Coventry, CV4 7AL, UK

\href{hanwen.liu@warwick.ac.uk}{hanwen.liu@warwick.ac.uk}


\begin{thebibliography}{00}
    \bibitem{1} Hančl J, Jaššová A, Lertchoosakul P, et al. On the metric theory of p-adic continued fractions[J]. Indagationes Mathematicae, 2013, 24(1): 42-56.
    \bibitem{2} Haddley A, Nair R. On Schneider's Continued Fraction Map on a Complete Non-Archimedean Field[J]. Arnold Mathematical Journal, 2022, 8(1): 19-38.
    \bibitem{3} Bundschuh P. p-adische Kettenbrüche und Irrationalität p-adischer Zahlen[J]. Elemente der Mathematik, 1977, 32: 36-40.
\end{thebibliography}
\end{document}